\newtheorem{theorem}{Theorem}
\newtheorem{lemma}[theorem]{Lemma}
\DeclarePairedDelimiter{\innerprod}{\langle}{\rangle}
\newtheorem{definition}[theorem]{Definition}
\newcommand{\R}{\mathbb R}
\newcommand{\N}{\mathbb N}
\newcommand{\radon}{\mathcal{R}}
\newcommand{\xx}{c}
\newcommand{\yy}{y}
\newcommand{\Ao}{\mathbf A}                         
\newcommand{\uu}{u}
\newcommand{\vvv}{v}
\def\plus{{\boldsymbol{\texttt{+}}}}
\newcommand{\al}{\alpha}
\newcommand{\ran}{\operatorname{ran}}
\newcommand{\dom}{\operatorname{dom}}
\newcommand\abs[1]{\left\vert#1\right\vert}
\newcommand\norm[1]{{\left\Vert#1\right\Vert}}
\newcommand\snorm[1]{\Vert#1\Vert}
\newcommand*\bigcdot{\mathpalette\bigcdot@{.6}}
\newcommand*\bigcdot@[2]{\mathbin{\vcenter{\hbox{\scalebox{#2}{$\m@th#1\bullet$}}}}}
\newcommand{\signal}{x}
\newcommand{\data}{y}
\newcommand{\ndata}{y_{\delta}}
\newcommand{\Po}{\mathbf  P}                         
\newcommand{\Bo}{\mathbf  B}                         
\newcommand{\Ro}{\mathbf R}    
\newcommand{\M}{\mathbb M}                         
\newcommand{\X}{\mathbb X}                         
\newcommand{\Y}{\mathbb Y}
\newcommand{\Id}{\operatorname{Id}}
\newcommand{\NN}{\mathbf{U}}  
\newcommand{\nun}{\mathbf{N}}  
\newcommand{\nullnet}{\mathbf{N}}
\colorlet{lred}{red!40}
\colorlet{lgreen}{green!40}
\colorlet{lblue}{blue!40}
\numberwithin{equation}{section}
\numberwithin{figure}{section}
\numberwithin{theorem}{section}
\title{Big in Japan: Regularizing networks for solving inverse problems}
\author{Johannes~Schwab}
\author{Stephan~Antholzer}
\author{Markus~Halti~Haltmeier}
\affil{Department of Mathematics, University of Innsbruck\authorcr
Technikerstrasse 13, 6020 Innsbruck, Austria
\authorcr
{\tt johannes.schwab@uibk.ac.at},
{\tt stephan.antolzer@uibk.ac.at},
{\tt  markus.haltmeier@uibk.ac.at}
}
\date{September 14, 2019}
\begin{document}

\maketitle

\begin{abstract}
Deep learning and (deep) neural networks are emerging tools to address inverse problems
and image reconstruction tasks. Despite outstanding performance,  the mathematical analysis
for solving inverse problems by neural networks  is mostly missing. In this paper, we introduce
and rigorously analyze families of deep regularizing neural networks (RegNets) of the form $\Bo_\al + \nun_{\theta(\al)}  \Bo_\al$, where $\Bo_\al$ is a classical regularization and the network $\nun_{\theta(\al)}  \Bo_\al $
is trained to recover the missing part  $\Id_X - \Bo_\al$ not found by the classical regularization.   We show that these regularizing  networks yield a convergent regularization method for solving inverse problems. Additionally, we derive convergence rates (quantitative error estimates) assuming a sufficient decay of the associated distance function.
We demonstrate that our results recover existing convergence and convergence rates  results for filter-based regularization methods
as well as the recently introduced null space network as  special cases.  Numerical results are presented for a
tomographic sparse data problem, which  clearly demonstrate that  the proposed RegNets  improve the classical regularization as well as the  null space network.

\medskip \noindent \textbf{Keywords:} 
Inverse problems; regularizing networks;  convergence analysis; convolutional neural networks;   convergence rates; null space networks

\medskip \noindent \textbf{AMS subject classifications:}
65J20, 65J22, 45F05
\end{abstract}

\section{Introduction}
\label{sec:intro}

This paper is concerned with solving inverse problems of the form
\begin{equation}\label{eq:ip}
\ndata  =  \Ao\signal+ z \,,
\end{equation}
where $\Ao\colon \X\rightarrow \Y$ is a bounded linear operator between Hilbert spaces $\X$ and  $\Y$, and $z$ denotes the data distortion that satisfies $\norm{z} \leq \delta$ for some noise level $\delta\geq 0$. Many inverse problems arising in medical imaging, signal processing, astronomy, computer vision and other fields can be written in the form \eqref{eq:ip}. A main characteristic property of  inverse problems is that they are ill-posed \cite{engl1996regularization,scherzer2009variational}. This means that the solution of \eqref{eq:ip} is either not unique or is unstable with respect to  data perturbations.

To solve such kind of inverse problems one has to employ regularization methods,
which serve the following two main purposes:
\begin{itemize}
\item Select particular solutions of the noise-free equation, thereby accounting for non-uniqueness $\ker(\Ao ) \neq \{0 \}$.
\item Approximate \eqref{eq:ip} by neighboring but  stabler problems.
\end{itemize}
Our aim is  finding convergent regularization methods for the solution of \eqref{eq:ip} using  deep neural networks  that can be adjusted to
realistic training data.

In \cite{schwab2018deep} we focused on the non-uniqueness issue, where particular solutions of the noise-free equation, \eqref{eq:ip} with $z=0$, are approximated using classical regularization methods combined with null space networks. Null space networks (introduced originally in \cite{mardani2017deep} in a finite dimensional setting)
are refined residual networks, where the residual is projected onto the null space of the operator $\Ao$.
In this context, the stabilization of finding a solution to \eqref{eq:ip} comes from a given traditional regularization method and the role of the network is to select  correct solutions in a data consistent  manner.

\subsection*{Proposed regularizing networks (RegNets)}

In this paper, we go one step further and generalize the concept of deep null space learning
by allowing the  network to also act in the orthogonal complement of the null space of $\Ao$ in a controlled manner. This is in particular useful if the operator contains several small singular values
that are not strictly equal to zero. Similar to the components in the kernel, these parts are difficult to be reconstructed by a classical linear regularization method and quantitative
error estimates require strong smoothness assumptions on the objects to be recovered. Learning almost invisible components can significantly improve reconstruction results for less smooth objects.

The proposed RegNets generalize the structure of null space networks
analyzed in \cite{schwab2018deep}  and consist of a
 family $(\Ro_\al)_{\al>0}$ of mappings  $\Ro_\al \colon \Y \rightarrow \X$  of the form
\begin{equation}\label{def:regnet}
\Ro_\al
\coloneqq
\Bo_\al +  \nun_{\theta(\al)}  \Bo_\al
\quad \text{ for } \al >0 \,.
\end{equation}
Here $(\Bo_\al)_{\al >0}$ with  $\Bo_\al\colon \Y\rightarrow \X$
is a classical regularization of the Moore-Penrose inverse $\Ao^\plus$, and $
\nun_{\theta(\al)}   \colon \X \to \X
$ are  neural networks that can be trained to map the part $\Bo_\al \Ao \signal$ recovered by  the regularization method to the missing part $(\Id_X-\Bo_\al \Ao) \signal$. Here  $(\nun_{\theta})_{\theta \in \Theta}$ is any family of parameterized functions
that  can be taken as a standard network, for example a convolutional neural network (CNN). In particular,  $\nun_{\theta(\al)}  $ 
 is allowed to depend on the regularization parameter $\alpha$.  

In this paper we  show that if  $\nun_{\theta(\al)}  \Bo_\al \Ao  \rightarrow \nullnet $ on  $\ran(\Ao^\plus)$   as $\al\rightarrow 0$  for some function $ \nullnet \colon \X\rightarrow \X$ with $\ran( \nullnet)\subseteq \ker(\Ao)$, the RegNets   defined by \eqref{def:regnet}  yield a convergent  regularization method with admissible set $
\M \coloneqq (\Id_X + \nullnet) ( \ran(\Ao^\plus) ) $. Further we derive  convergence rates
(quantitative error estimates) for elements satisfying  conditions different from the classical smoothness assumptions.

\subsection*{Outline}

The organization of this paper is as follows.
In Section \ref{sec:regnet} we present some background and related results.
In Section~\ref{sec:regprops} we introduce  the proposed
regularizing networks and  show that they yield a convergent
regularization method. Further, we derive convergence rates  under a modified source condition. In Section~\ref{sec:spcases} we demonstrate that our results contain existing
convergence results as special cases. This includes filter-based methods, classical Tikhonov regularization, and regularization by null space networks.
Moreover, we examine a data driven extension of singular components,
where the classical regularization method is given by truncated singular value decomposition (SVD). The paper concludes with a short  summary  presented in Section~\ref{sec:conclusion}.

\section{Some background}

\label{sec:regnet}

Before actually analyzing the  RegNets, we
recall basic notions and concepts from
regularization  of inverse problems (see \cite{scherzer2009variational,engl1996regularization}) 
 and the concept
of null space networks. We also review some previous
related work.

\subsection{Classical regularization of inverse problems}

Regularization methods to stably find a solution of \eqref{eq:ip} use a-priori information about the unknown, for example that the solution $\signal$ lies in a particular set of admissible elements $\M$. For such a set $\M\subseteq \X$, a regularization method is a tuple $((\Bo_\al)_{\al >0},\al^\star)$, where  $\Bo_\al\colon \Y\rightarrow \X$ are  continuous operators and  $\al^\star(\delta,\ndata)$ is a  parameter choice function such that for all $x\in \M$ we have $\Bo_{\al^{\star}(\delta,\ndata)}(\ndata)\rightarrow x$ as $\delta \rightarrow 0$.

Classical regularization methods approximate the Moore-Penrose inverse $\Ao^\plus$ and the set $\M$ is given by $\M =\ker(\Ao)^\perp$. Note that for any $y \in \ran(\Ao)$, the Moore-Penrose inverse $\Ao^\plus y$ is given by the minimal norm solution of \eqref{eq:ip}. A precise  definition of a regularization
method is as follows.

\begin{definition}[Regularization method]\label{def:regmeth}
Let $(\Bo_\al)_{\al>0}$ a family of continuous operators $\Bo_\al\colon \Y\rightarrow \X$ and suppose $\al^\star\colon (0,\infty)\times \Y \rightarrow (0,\infty)$. The pair $((\Bo_\al)_{\al>0},\al^\star)$ is called a (classical) regularization method for the solution of $\Ao\signal=\data$ with $\data\in \dom(\Ao^\plus)$, if the following holds
\begin{itemize}
\item $\lim_{\delta\rightarrow 0} \sup\{\al^\star(\delta,\ndata)\mid \ndata\in \Y\,,  \|\ndata-y\|\leq\delta\}=0$.
\item $\lim_{\delta\rightarrow 0} \sup\{\|\Ao^\plus\data-\Bo_{\al^\star(\delta,\ndata)}\ndata \| \mid \ndata\in \Y \text{ and } \|\ndata-y\|\leq\delta\}=0$.
\end{itemize}
\end{definition}

The parameter choice $\al^\star$, depending on the noise level as well as on the data, determines the level of approximation of the Moore-Penrose inverse. For decreasing noise level the ill-posed problem \eqref{eq:ip} can be approximated by stable problems getting closer to finding the minimum norm solution of \eqref{eq:ip} and in the limit it holds $\lim_{\delta\rightarrow 0}\Bo_{\al^\star(\delta,\ndata)}(\ndata)=\Ao^\plus \data$.

A great variety of regularization methods, namely filter-based regularization methods,
can be defined by regularizing filters.

\begin{definition}[Regularizing filter]\label{def:filtreg}
A family $(g_\al)_{\al>0}$ of piecewise continuous functions $g_\al\colon[0,\|\Ao^*\Ao\|]\rightarrow \R$ is called regularizing filter if
\begin{itemize}
\item $\sup\{|\lambda g_\al(\lambda)| \mid \al>0 \text{ and } \lambda \in [0,\|\Ao^* \Ao\|]\}< \infty$.
\item $\forall \lambda \in (0,\|\Ao^* \Ao\|]\colon \lim_{\al\rightarrow 0} g_\al(\lambda)=1/\lambda$.
\end{itemize}
\end{definition}

Any regularizing filter $(g_\al)_{\al>0}$  defines  a regularization method  by taking
\begin{equation}\label{eq:filterbased}
\forall \al >0 \colon \quad \Bo_\al\coloneqq g_\al(\Ao^* \Ao)\Ao^*\,.
\end{equation}
We call a regularization according to \eqref{eq:filterbased} a (classical) filter based regularization. Note that $\Ao^* \Ao \colon \X \to \X$ is a self-adjoint bounded linear operator, and therefore $g_\al(\Ao^* \Ao) \colon  \X \to \X$   is bounded linear as well, defined by  the framework of functional calculus \cite{Hel69,Wei80}. In particular, if $\Ao^* \Ao$ has an eigenvalue decomposition $\Ao^* \Ao (\signal) = \sum_{n\in\N} \lambda_n \langle u_n,\signal\rangle u_n$, then
\begin{equation*}
	\forall x \in \X \colon \quad
g_\al(\Ao^* \Ao) \signal \coloneqq  \sum_{n\in\N} g_\al(\lambda_n) \langle u_n, \signal\rangle u_n \,.
\end{equation*}
In the general case, the spectral decomposition of $\Ao^* \Ao$ is used to rigorously define $g_\al(\Ao^* \Ao) $, see \cite{Hel69,Wei80}. 

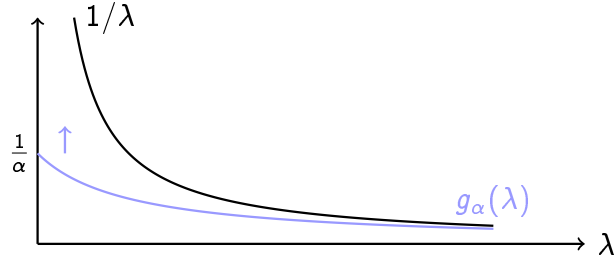
\begin{figure}[htb!]
\centering
\begin{tikzpicture}[scale=1.2]
\draw[line width =0.3mm, ->](0,0)--(6,0) node[right]{$\lambda$};
\draw[line width =0.3mm, ->](0,0)--(0,2.5);
\draw[line width =0.3mm, domain=0.4:5, samples=200] plot (\x,{1/\x});
\draw[line width =0.3mm, lblue, domain=0:5, samples=200] plot (\x,{1/(1+\x)})node[above]{$g_\alpha(\lambda)$};
\node at (-0.2,1){$\frac{1}{\alpha}$};
\draw[line width =0.3mm, lblue,->](0.3,1)--(0.3,1.3);
\node at (0.8,2.5){$1/\lambda$};
\end{tikzpicture}
\caption{\label{fig:filter1} Illustration of the regularizing filter for Tikhonov regularization.}
\end{figure}

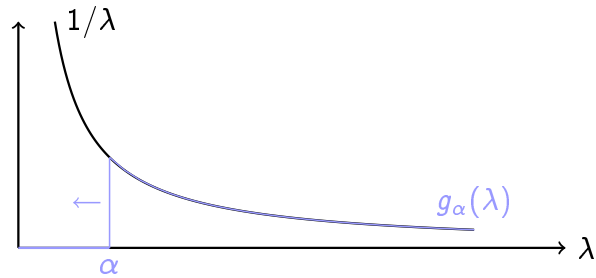
\begin{figure}[htb!]
\centering
\begin{tikzpicture}[scale=1.2]
\draw[line width =0.3mm, ->](0,0)--(6,0) node[right]{$\lambda$};
\draw[line width =0.3mm, ->](0,0)--(0,2.5);
\draw[line width =0.3mm, domain=0.4:5, samples=200] plot (\x,{1/\x});
\draw[line width =0.3mm, lblue, domain=0:1, samples=200] plot (\x,{0})node[below] {$\alpha$};
\draw[line width =0.2mm, lblue, domain=0:1] plot (1,{\x});
\draw[line width =0.2mm, lblue, domain=1:5] plot (\x,{1/\x})node[above]{$g_\alpha(\lambda)$};
\draw[line width =0.2mm, lblue,->](0.9,0.5)--(0.6,0.5);
\node at (0.8,2.5){$1/\lambda$};
\node at (0.8,2.5){\phantom{$1/\lambda$}};
\end{tikzpicture}
\caption{Illustration of the  regularizing filter \label{fig:filter2} for truncated SVD.}
\end{figure}

Two prominent examples of filter-based regularization methods are classical Tikhonov regularization and
truncated SVD.  In Tikhonov regularization, the regularizing filter is given by  $g_\al(\lambda)=1/(\lambda+\al)$,
see Figure~\ref{fig:filter1}. This yields $\Bo_\al=(\Ao^\ast \Ao +\al \Id_X)^{-1}\Ao^\ast$.
In truncated SVD,  the regularizing  filter is given by
\begin{equation}\label{eq:SVDfil}
g_\al(\lambda)=\begin{cases}
0, \quad &\lambda < \al \\
\frac{1}{\lambda} &\lambda \geq \al \,,
\end{cases}	
\end{equation}
see Figure \ref{fig:filter2}.
For both  methods the admissible set is $\M=\ker(\Ao)^\perp$.

Other typical filter-based regularization methods are the Landweber iteration and iterative
Tikhonov regularization~\cite{engl1996regularization}.

\subsection{Null space networks}

Standard regularization approximates the Moore Penrose inverse and therefore selects elements in $\ker(\Ao)^\perp$.
In \cite{schwab2018deep} we introduced regularization of  null space networks, where the aim is to approximate elements in a set  $\M$ different from $\ker(\Ao)^\perp$.

Null space networks are defined as follows.

\begin{definition}[Null space network]\label{def:nsn}
We call a function
$\Id_X + \nullnet \colon \X \to \X$
 a  null space network  if $ \nullnet =\Po_{\ker(\Ao)}  \NN$
  where $\NN \colon \X \to \X$ is  any Lipschitz  continuous function.
\end{definition}

Moreover   we use the following generalized notion of a
regularization method.

\begin{definition}[Regularization methods with admissible set $\M$]\label{def:regmeth1}
Let $(\Ro_\al)_{\al>0}$ be a family of continuous operators $\Ro_\al\colon \Y\rightarrow \X$ and $\al^\star\colon (0,\infty)\times \Y \rightarrow (0,\infty)$. Then the pair $((\Ro_\al)_{\al>0},\al^\star)$ is  called a regularization method (for the solution of $\Ao\signal=\data$) with admissible set $\M$, if for all
$x\in\M$,  it holds
\begin{itemize}
\item $\lim_{\delta\rightarrow 0} \sup\{\al^\star(\delta,\ndata)\mid \ndata\in \Y \,,  \|\ndata- \Ao x \|\leq\delta\}=0$.
\item $\lim_{\delta\rightarrow 0} \sup\{\| x -\Ro_{\al^\star(\delta,\ndata)}\ndata \| \mid \ndata\in \Y \text{ and } \|\ndata-\Ao x\|\leq\delta\}=0$.
\end{itemize}
In this case we call $(\Ro_\al)_{\al>0}$ an $(\Ao,\M)$-regularization.
\end{definition}

\begin{figure*}[htb!]
\begin{tikzpicture}[scale=1]
\draw[line width=1.3pt,->] (0,0)--(7,-3);
\node at (8.2,-3.4){$\ran(\Ao^{\plus}) = \ker(\Ao)^\bot$};
\draw[line width=1.3pt,->] (3,-2)--(4.5,1.5);
\node at (4.5,1.8){$\ker(\Ao)$};
\draw [line width =0.6mm, lblue] plot [smooth] coordinates {(0.5,7/6) (3,1.4) (4,0.5) (5,1) (7,0.7) (8,1)};
\node at (0.5,8.3/6)[above] {$\M \coloneqq (\Id_X + \nullnet)(\ran(\Ao^{\plus}))$};
\filldraw (5,-15/7)circle(1.7pt);
\node at (4.4,-31/14)[below]{$\Bo_\al \ndata$};
\draw [dashed, line width=1.3pt,->] (5,-15/7)--(5+2.9*3/7,-15/7+2.9*1);
\filldraw[color=black] (5+2.95*3/7,-15/7+2.95*1)circle(3pt);
\node at (5+10/7,-15/7+3.1*1)[above]{$\Ro_\al \ndata$};
\node at (-3,0){};
\end{tikzpicture}
\caption{\label{fig:null} Regularization defined by
a  null space network.
For a filter-based regularization method we have
$\Bo_\al \ndata \in \ker(\Ao)^\bot$. The  regularized
null space network $\Ro_\al = \Bo_\al + \nullnet \circ \Bo_\al $  adds  reasonable parts along the null space $\ker(\Ao)$  to
the standard regularization $\Bo_\al \ndata $.}
\end{figure*}

The regularized null space networks analyzed in  \cite{schwab2018deep} take the form
\begin{equation}\label{eq:null-space}
\Ro_\al\coloneqq (\Id_X+\nullnet)\circ \Bo_\al \quad \text{for } \al>0 \,,
\end{equation}
where   $(\Bo_\al)_{\al>0}$ is any classical regularization method
and $\Id_X+\nullnet$ any null space network  (for example, defined by a  trained deep neural network).
In  \cite{schwab2018deep} we have shown that  \eqref{eq:null-space} yields a regularization method with admissible set
$ \M \coloneqq (\Id_X + \nullnet )(\ran(\Ao^\plus))$.
This approach is designed to find the null space component of the solution in a data driven manner with a fixed neural network $\nullnet$ independent of the regularization parameter $\al$, that works in the null space of $\Ao$; compare  Figure~\ref{fig:null}.

In this paper we go one  step further and
consider  a sequences of regularizing networks (RegNets)
of the form $(\Id_X + \nun_{\theta(\al)} )\circ \Bo_\al$ generalizing null space networks of the form \eqref{eq:null-space}.
Here $\nun_{\theta(\al)} $ depends on $\al$ and is allowed to act in the orthogonal complement of the kernel  $\ker(\Ao)^\perp$. We give conditions under which
 this approach yields   a regularization method with admissible set $\M$.

Allowing the  network $\nun_{\theta(\al)} $ to  also act  in $\ker(\Ao)^\perp$ in particular is beneficial, if the forward operator $\Ao$ contains  many  small  singular values.  In this case, the  network can  learn  components which are not
sufficiently well contained  in the data. Note that in the limit $\al \rightarrow 0$,  the regularization method $(\Bo_\al)_{\al >0}$ converges to $\Ao^\plus$ point-wise. Therefore, in the limit  $\al \to 0$, the network is  restricted to learn  components in the null space of $\Ao$.

\subsection{Related work}

Recently,  many works using deep neural networks to solve inverse problems have been published. These papers include two stage approaches, where in a first step an initial reconstruction is done, followed by a deep neural network. Several network architectures,
often  based on the U-net architecture \cite{ronneberger2015unet} and improvements of it \cite{ye2018deep,han2018framing}, have been used for this class of methods.

CNN based methods that only modify  the part of the reconstruction that is contained in  the null space of the forward operator have been proposed in \cite{mardani2017recurrent,mardani2017deep}. In \cite{schwab2018deep} we introduced regularized null space networks which   are  shown to lead a convergent regularization method. Recently, a related synthesis approach for learning the invisible frame coefficients for limited angle computed tomography has been proposed in \cite{bubba2019learning}.

Another possibility to improve reconstructions by deep learning is to replace certain operations in an iterative scheme by deep neural networks or use learned regularization functionals \cite{kobler2017variational,gupta2018cnn,li2018nett,adler2018banach,adler2017solving}. Further, a Bayesian framework has been proposed in \cite{adler2017learning,adler2018deep}, where the posterior distribution of solutions is approximated by learned CNNs.

\section{Convergence and convergence rates of RegNets}

\label{sec:regprops}

In this section, we formally introduce the concept of RegNets, analyze their regularization properties and derive convergence rates.

Throughout the following, let $\Ao\colon \X\rightarrow \Y$ be a linear and bounded operator and $\Id_X + \nullnet \colon \X\rightarrow \X$ be a null space network, see Definition~\ref{def:nsn}.
Further, let $(\Bo_\al)_{\al>0}$ denote a classical filter-based
regularization method, defined by the regularizing filter
 $(g_\al)_{\al>0}$, see Definition~\ref{def:filtreg}.

\subsection{Convergence}
\label{subsec:conv}

Let us first  formally define a
 family of regularizing networks.

\begin{definition}\label{def:regnets}
Let $(\Bo_\al)_{\al>0}$ be a classical filter-based regularization method.
A family $(\nun_{\theta(\al)} )_{\al>0}$  of Lipschitz continuous functions $\nun_{\theta(\al)} \colon \X\rightarrow \X$ is called\\
$((\Bo_\al)_{\al>0}, \nullnet)$-adapted if
\begin{itemize}
\item $ \lim_{\al\rightarrow 0} \nun_{\theta(\al)} (\Bo_\al \Ao z) = \nullnet(z)$
for all $z\in \ran(\Ao^\plus)$.
\item
The Lipschitz constants of $ (\nun_{\theta(\al)})_{\al >0} $
are bounded from above by some constant $L>0$.
\end{itemize}
\end{definition}

For the following recall Definition~\ref{def:regmeth1} of a regularization method  with admissible set $\M$. We will often  use the notation $ \nun z \coloneqq \nun(z)$. The  following convergence results hold.

\begin{theorem}[RegNets]\label{theo:conv}
Let $(\Bo_\al)_{\al>0}$ be a classical filter-based regularization method
and $(\nun_{\theta(\al)} )_{\al>0}$  be
$((\Bo_\al)_{\al>0}, \nullnet)$-adapted.
 Then  the  family
\begin{equation}\label{eq:reg}
\Ro_\al(\ndata)=(\Id_X+\nun_{\theta(\al)} )\Bo_\al(\ndata),
\end{equation}
is a regularization method with admissible set 
\begin{equation}\label{eq:Mset}
    \M \coloneqq  (\Id_X + \nullnet) ( \ran(\Ao^\plus) )\,.
\end{equation}
We call $(\Ro_\al)_{\al >0}$
a regularizing family of networks (RegNets) adapted
to $((\Bo_\al)_{\al>0}, \nullnet)$.
\end{theorem}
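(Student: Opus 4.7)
The plan is to verify directly the two conditions of Definition~\ref{def:regmeth1} by combining the classical regularization behavior of $(\Bo_\al)_{\al>0}$ with the adaptedness of $(\nun_{\theta(\al)})_{\al>0}$. First, fix $x\in\M$ and write $x = z + \nullnet(z)$ with $z\in\ran(\Ao^\plus)=\ker(\Ao)^\perp$. Because $\ran(\nullnet)\subseteq\ker(\Ao)$ we have $\Ao x = \Ao z$, so the condition $\|\ndata-\Ao x\|\leq\delta$ coincides with $\|\ndata-\Ao z\|\leq\delta$. This reduces the question to classical regularization applied to the minimum-norm solution $z$ of $\Ao u = \Ao z$, to which Definition~\ref{def:regmeth} applies: there exists an admissible parameter choice $\al^\star=\al^\star(\delta,\ndata)$ with $\al^\star\to 0$ and $\|z-\Bo_{\al^\star}\ndata\|\to 0$ as $\delta\to 0$, uniformly over $\{\ndata:\|\ndata-\Ao z\|\leq\delta\}$. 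This same $\al^\star$ will be used for the RegNet.

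Next, I apply a two-step triangle inequality to the total error:
\begin{align*}
\|x-\Ro_\al\ndata\|
&\leq \|z-\Bo_\al\ndata\| + \|\nullnet(z)-\nun_{\theta(\al)}\Bo_\al\ndata\|\\
&\leq \|z-\Bo_\al\ndata\| + \|\nullnet(z)-\nun_{\theta(\al)}(\Bo_\al\Ao z)\| + L\,\|\Bo_\al\Ao z - \Bo_\al\ndata\|,
\end{align*}
where $L$ is the uniform Lipschitz bound from Definition~\ref{def:regnets}. The first term vanishes as $\delta\to 0$ by the classical theory just invoked. The second term vanishes as $\al\to 0$ (hence as $\delta\to 0$ under the admissible rule) thanks to the pointwise limit $\nun_{\theta(\al)}(\Bo_\al\Ao z)\to\nullnet(z)$ guaranteed for $z\in\ran(\Ao^\plus)$. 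The third term is bounded by $L\,\|\Bo_\al\|\,\delta$, which tends to zero because any admissible choice $\al^\star$ for a classical filter-based regularization already satisfies $\|\Bo_{\al^\star(\delta,\ndata)}\|\,\delta\to 0$ as $\delta\to 0$ (this is built into the standard construction of admissible a priori or a posteriori rules for filter-based methods). Continuity of each $\Ro_\al$ is immediate since $\Bo_\al$ is bounded linear and $\nun_{\theta(\al)}$ is Lipschitz.

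The main obstacle, as I see it, is just the bookkeeping around the third term: one must be sure that a single admissible parameter choice $\al^\star$ delivers both the classical convergence $\|z-\Bo_{\al^\star}\ndata\|\to 0$ and the noise amplification bound $\|\Bo_{\al^\star}\|\,\delta\to 0$. This is not an extra assumption, since it is exactly the content of admissibility for filter-based regularization in the classical framework and is therefore compatible with the adaptedness condition, whose first bullet only requires $\al\to 0$. Assembling the three bounds under this common $\al^\star$ yields the required convergence $\Ro_{\al^\star(\delta,\ndata)}\ndata \to x$ for every $x\in\M$, which completes the verification of Definition~\ref{def:regmeth1} and establishes the theorem.
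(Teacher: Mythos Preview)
Your proof is correct and follows essentially the same approach as the paper. The only cosmetic difference is how the error is grouped: the paper first splits off all noise contributions to obtain $(1+L)\|\Bo_\al\|\delta + \|z-\Bo_\al\Ao z\| + \|\nullnet z - \nun_{\theta(\al)}\Bo_\al\Ao z\|$, whereas you keep $\|z-\Bo_\al\ndata\|$ intact and invoke the classical regularization convergence directly for that term, splitting off only $L\|\Bo_\al\|\delta$ from the network part --- but the ingredients (the decomposition $x=z+\nullnet(z)$, $\Ao x=\Ao z$, the uniform Lipschitz bound, adaptedness, and the admissible-rule control of $\|\Bo_\al\|\delta$) are identical.
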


\begin{proof}
Let $\signal_{\al,\delta}\coloneqq \Ro_\al(\ndata)=(\Id_X+\nun_{\theta(\al)} )\Bo_\al(\ndata)$.
Then we have
\begin{align} \nonumber
\|\signal -&\signal_{\al,\delta}\|
 \\ \nonumber =&\|\Bo_\al \Ao \signal +(\Id_X-\Bo_\al \Ao)\signal
 -\Bo_\al \ndata-\nun_{\theta(\al)} \Bo_\al \ndata\|
 \\ \nonumber \leq \ &\|\Bo_\al(\Ao \signal -\ndata)\|
+\|(\Id_X-\Bo_\al \Ao)\signal -\nun_{\theta(\al)}  \Bo_\al \Ao \signal\|
\\ \nonumber &
+\|\nun_{\theta(\al)} \Bo_\al \Ao \signal-\nun_{\theta(\al)} \Bo_\al \ndata\|
\\ 
\leq  \ &(1+L)\|\Bo_\al\|\delta
+\|x-\nun_{\theta(\al)} \Bo_\al\Ao \signal -\Bo_\al \Ao \signal\| \,.\label{eq:ineq}
\end{align}
Assuming that $x=(\Id_X+\nullnet)z\in \M$ with $z\in\ran(\Ao^\plus)$ we get
\begin{align*}
\|\signal-& \signal_{\al,\delta}\|
\\ \leq  &(1+L)\|\Bo_\al\|\delta
+ \|z
 +\nullnet z-\nun_{\theta(\al)} \Bo_\al \Ao z-\Bo_\al \Ao z\| \\
\leq  &(1+L)\|\Bo_\al\|\delta + \|z-\Bo_\al \Ao z\|
+ \| \nullnet z-\nun_{\theta(\al)}  \Bo_\al \Ao z\|.
\end{align*}
Eventually we get $\lim_{\delta\rightarrow 0} \|\signal-\signal_{\al,\delta}\| = 0$ since the first expression vanishes by assumption, the second because $(\Bo_\al)_{\al>0}$ is a regularization method and the last because of $(\nun_{\theta(\al)} )_{\al>0}$ being $((\Bo_\al)_{\al>0}, \nullnet)$-adapted.
\end{proof}

\subsection{Convergence rates}

In this section, we derive convergence rates for RegNets introduced
 in Section~\ref{subsec:conv}.
  To that end, we first introduce a distance function and define the qualification of a  classical regularization method.
The definition of the distance function is essentially motivated by  \cite{hofmann2005convergence}.

\begin{definition}[Distance function]
For  any numbers $\al, \rho, \mu >0$ and $x \in \X$ we define the distance function
\begin{multline}\label{eq:dist}
d_\al(x; \rho, \mu)\coloneqq \inf\{\|x-\nun_{\theta(\al)}  \Bo_\al \Ao \signal -(\Ao^\ast \Ao)^\mu \omega\| \\ \mid \omega\in \X \wedge \|\omega\|\leq \rho\}.
\end{multline}
\end{definition}

The qualification of a regularization method is a classical concept in regularization
theory  (see  \cite[Theorem~4.3]{engl1996regularization}) and central for the
derivation of  convergence rates.

\begin{definition}[Qualification]\label{def:order}
We say that a filter based regularization $\Bo_\al\coloneqq g_\al(\Ao^\ast \Ao)\Ao^\ast$  defined by the regularizing filter $(g_\al)_{\al>0}$ has qualification at last $\mu_0 \in  (0,\infty)$
if there is a constant $C>0$ such that
for all $\mu \in (0,\mu_0]$  we have
\begin{equation} \label{eq:quali}
    \forall \al >0 \colon \sup\{\lambda^\mu \abs{ 1-\lambda g_\alpha(\lambda) } \mid  \lambda \in [0,\lVert \Ao^\ast\Ao \rVert]\} \le C \al^\mu \,.
\end{equation}
The largest value $\mu_0$  such that \eqref{eq:quali}
holds  for all
$\mu \in (0,\mu_0]$
is called the qualification of the regularization method
$(\Bo_\al)_{\al>0}$ or the regularizing filter $(g_\al)_{\al>0}$
(taken as infinity if \eqref{eq:quali} holds for all $\mu > 0$).
\end{definition}

Note that Tikhonov regularization has qualification $\mu_0=1$, and 
truncated SVD regularization has infinite qualification.  Further, if $(\Bo_\al)_{\al>0}$
has qualification $\mu_0$, then (see \cite{engl1996regularization})
\begin{align} \label{eq:q1}
    &\norm{ (\Id_X-\Bo_\al \Ao)(\Ao^\ast \Ao)^\mu \omega }
     \leq C \rho \al^\mu
    \\  \label{eq:q12}
    &\| \Ao (\Id_X - \Bo_\al \Ao)(\Ao^\ast \Ao)^\mu \omega \|
     \leq C \rho \al^{\mu+1/2}
\end{align}
holds for $\mu \leq \mu_0$, $\al >0$ and all $\omega \in \X$
with $\norm{\omega} \leq \rho$.

\begin{lemma}\label{lem:est}
Let $(\Ro_\al)_{\al >0}$ be a family of RegNets adapted to
$((\Bo_\al)_{\al>0}, \nun)$ where  $(\Bo_\al)_{\al >0}$ has qualification of order at least   $\mu$.  Then, for any
$\al, \delta, \rho >0$ and $x \in \X$,
\begin{multline}
\|\Ro_\al(\ndata)-x\|
  \leq \delta (1+L)\|\Bo_\al\|
\\ \quad +C\rho \al^\mu+d_\al(x; \rho, \mu)
+ \|\Bo_\al\Ao \nun_{\theta(\al)}  \Bo_\al \Ao \signal\|  \,,\label{eq:est}
\end{multline}
where $\ndata \in \Y$ satisfies $\snorm{\Ao x -\ndata }\leq   \delta$ and $C$ is the  constant from Definition \ref{def:order}.
\end{lemma}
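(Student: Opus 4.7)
The plan is to start from the bound (3.2) already obtained in the proof of Theorem~3.3, namely
\[
\|x - \Ro_\al(\ndata)\| \leq (1+L)\|\Bo_\al\|\,\delta + \|x - \Bo_\al\Ao x - \nun_{\theta(\al)}\Bo_\al\Ao x\|,
\]
and then show that the second summand is controlled by $C\rho\al^\mu + d_\al(x;\rho,\mu) + \|\Bo_\al \Ao\,\nun_{\theta(\al)}\Bo_\al\Ao x\|$. The data-error term on the right-hand side of the lemma is therefore already in place, so the entire argument reduces to this noise-free decomposition.

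The key identity I would use is
\[
x - \Bo_\al\Ao x - \nun_{\theta(\al)}\Bo_\al\Ao x = (\Id_X - \Bo_\al\Ao)\bigl(x - \nun_{\theta(\al)}\Bo_\al\Ao x\bigr) - \Bo_\al\Ao\,\nun_{\theta(\al)}\Bo_\al\Ao x,
\]
which one verifies simply by expanding the right-hand side. The second summand here is exactly the last term appearing in the lemma's bound, so only $\|(\Id_X - \Bo_\al\Ao)(x - \nun_{\theta(\al)}\Bo_\al\Ao x)\|$ remains to be estimated.

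To handle this, I would insert an arbitrary source element $(\Ao^\ast\Ao)^\mu \omega$ with $\|\omega\|\leq \rho$ and split
\[
(\Id_X - \Bo_\al\Ao)\bigl(x - \nun_{\theta(\al)}\Bo_\al\Ao x\bigr) = (\Id_X - \Bo_\al\Ao)\bigl((x - \nun_{\theta(\al)}\Bo_\al\Ao x) - (\Ao^\ast\Ao)^\mu\omega\bigr) + (\Id_X - \Bo_\al\Ao)(\Ao^\ast\Ao)^\mu\omega.
\]
The second term is bounded by $C\rho\al^\mu$ via the qualification estimate \eqref{eq:q1}. For the first term I would invoke the standard fact that, for a filter-based regularization with $\lambda g_\al(\lambda)\in[0,1]$ (which is the case for every filter covered by Definition~2.2 in this paper, including Tikhonov and truncated SVD), the operator $\Id_X - \Bo_\al\Ao = \Id_X - g_\al(\Ao^\ast\Ao)\Ao^\ast\Ao$ has norm at most one by functional calculus, so this first term is bounded by $\|(x-\nun_{\theta(\al)}\Bo_\al\Ao x) - (\Ao^\ast\Ao)^\mu\omega\|$. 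Taking the infimum over all admissible $\omega$ then yields precisely $d_\al(x;\rho,\mu)$.

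Assembling the three contributions gives the claimed inequality. The only non-routine step is the identity that isolates the extra term $\Bo_\al\Ao\,\nun_{\theta(\al)}\Bo_\al\Ao x$ coming from the fact that $\nun_{\theta(\al)}\Bo_\al\Ao x$ need not lie in $\ker(\Ao)$ (unlike in the null space network case of \cite{schwab2018deep}); this is really the source of the new term appearing in the estimate relative to the convergence rates result for null space networks. The rest is bookkeeping combining the qualification property with the definition of the distance function $d_\al$.
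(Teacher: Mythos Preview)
Your proof is correct and follows exactly the same route as the paper: split off the data-error term via (3.2), rewrite the approximation error as $(\Id_X-\Bo_\al\Ao)(x-\nun_{\theta(\al)}\Bo_\al\Ao x) - \Bo_\al\Ao\,\nun_{\theta(\al)}\Bo_\al\Ao x$, insert a source element $(\Ao^\ast\Ao)^\mu\omega$, and apply the qualification estimate together with $\|\Id_X-\Bo_\al\Ao\|\leq 1$ before taking the infimum. The only minor remark is that your justification for this contraction bound is slightly imprecise --- Definition~2.2 only guarantees $\sup|\lambda g_\al(\lambda)|<\infty$, not $\lambda g_\al(\lambda)\in[0,1]$ --- but the paper uses the very same bound without comment in passing from $\|(\Id_X-\Bo_\al\Ao)(\cdots)\|$ to $d_\al(x;\rho,\mu)$, so you are at least as explicit as the original.
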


\begin{proof}
As in the proof of Theorem \ref{theo:conv} we have
\begin{equation}
\|\signal-\signal_{\al,\delta}\|\leq (1+L)\|\Bo_\al\|\delta  + \underbrace{\|\signal-\nun_{\theta(\al)} \Bo_\al \Ao \signal-\Bo_\al \Ao \signal\|}_{\eqqcolon E_\al}.
\end{equation}
Further for all $\omega\in \X$ with $\|\omega\|\leq \rho$, the term $E_\al$ can be estimated as
\begin{align*}
E_\al \leq  &\norm{ \signal-\nun_{\theta(\al)} \Bo_\al \Ao \signal
- \Bo_\al \Ao (\signal-\nun_{\theta(\al)} \Bo_\al \Ao \signal)}
\\& \qquad   +\|\Bo_\al \Ao \nun_{\theta(\al)}  \Bo_\al \Ao \signal\|
\\
 = & \|(\Id_X-\Bo_\al\Ao)(\signal-\nun_{\theta(\al)}  \Bo_\al \Ao \signal)\|
\\&
+  \|\Bo_\al \Ao \nun_{\theta(\al)}  \Bo_\al \Ao \signal\|\\
\leq \ &\|(\Id_X-\Bo_\al \Ao)(\Ao^\ast \Ao)^\mu \omega\|
\\ &+\|(\Id_X-\Bo_\al\Ao)(\signal -\nun_{\theta(\al)}  \Bo_\al \Ao
- (\Ao^\ast \Ao)^\mu \omega\|
\\ & \qquad+
\|\Bo_\al \Ao \nun_{\theta(\al)}  \Bo_\al \Ao \signal\|\\
\leq  & \|(\Id_X-\Bo_\al \Ao)(\Ao^\ast \Ao)^\mu \omega\|
\\& +d_\al(x; \rho, \mu)+\|\Bo_\al \Ao \nun_{\theta(\al)}  \Bo_\al \Ao \signal \| \,.
\end{align*}
Because $(\Bo_\al)_{\al>0}$ has qualification of order $\mu$, we have
\begin{equation*}
E_\al \leq C\rho\al^\mu+d_\al(x; \rho, \mu)  +\|\Bo_\al\Ao \nun_{\theta(\al)}  \Bo_\al \Ao \signal\| \,,
\end{equation*}
which concludes the proof.
\end{proof}

From Lemma \ref{lem:est} we obtain the following theorem providing
convergence rates for  families of RegNets.

\begin{theorem}[Convergence rate]\label{thm:rates}
Let $(\Ro_\al)_{\al >0}$ be a family of RegNets adapted to
$((\Bo_\al)_{\al>0}, \nullnet)$ for some classical regularization
$(\Bo_\al)_{\al}$ and $\M$ defined by a null space network 
$\Id_X + \nullnet$.
Further, assume that for a  set
$\M_{\rho,\mu} \subseteq \M$  the following hold:
\begin{enumerate}[leftmargin=2.7em,label=(A\arabic*)]
\item \label{a1} The parameter choice rule  satisfies  $\al \asymp \delta^{\frac{2}{2\mu+1}}$.
\item \label{a2} For all $x \in \M_{\rho,\mu}$  we have
$$d_\al(x; \rho, \mu) = \mathcal{O}(\al^\mu) \text{ as } \al \to 0$$
\item \label{a3} For all $x \in \M_{\rho,\mu}$ we have
$$\|\Bo_\al\Ao \nun_{\theta(\al)}  \Bo_\al \Ao \signal\|=\mathcal{O}(\al^\mu)
\text{ as } \al \to 0 \,.$$
\item \label{a4} $(\Bo_\al)_{\al >0}$ has qualification at least $\mu$.
\end{enumerate}
Then  for all $\signal \in \M_{\rho,\mu}$ the following convergence rates
 result holds
\begin{equation}
\| \Ro_\al(\ndata)-x\| = \mathcal{O}(\delta^{\frac{2\mu}{2\mu+1}})
\text{ as } \al \to 0 \,.
\end{equation}
\end{theorem}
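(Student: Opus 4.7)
The plan is to feed Lemma~\ref{lem:est} directly with the assumptions~\ref{a1}--\ref{a4}. Fix $x \in \M_{\rho,\mu}$ and $\ndata \in \Y$ with $\|\Ao x - \ndata\| \leq \delta$; assumption~\ref{a4} makes Lemma~\ref{lem:est} applicable with the given exponent $\mu$, and it yields
\begin{equation*}
\|\Ro_\al(\ndata)-x\| \le (1+L)\|\Bo_\al\|\,\delta + C\rho\,\al^\mu + d_\al(x;\rho,\mu) + \|\Bo_\al\Ao\,\nun_{\theta(\al)}\,\Bo_\al\Ao\, x\|.
\end{equation*}
Three of the four summands are immediately of order $\al^\mu$: the second by inspection, the third by~\ref{a2}, and the fourth by~\ref{a3}. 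What remains is to control the propagated-noise term $(1+L)\|\Bo_\al\|\,\delta$.

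For this I would invoke the classical filter-theoretic bound $\|\Bo_\al\|=\mathcal{O}(\al^{-1/2})$. The short route is through spectral calculus: $\|\Bo_\al\|^2 = \|\Bo_\al\Bo_\al^\ast\| = \sup_\lambda \lambda\, g_\al(\lambda)^2$, and factoring this as $g_\al(\lambda)\cdot \lambda g_\al(\lambda)$ lets us combine the uniform bound $|\lambda g_\al(\lambda)|\le C_1$ from Definition~\ref{def:filtreg} with the standard pointwise bound $|g_\al(\lambda)|\le C_2/\al$ that holds for any reasonable regularizing filter (trivial for Tikhonov, $g_\al(\lambda)=1/(\lambda+\al)\le 1/\al$, and for truncated SVD by \eqref{eq:SVDfil}; more generally it is a standing part of the filter framework, see~\cite{engl1996regularization}). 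Taking square roots gives the claim.

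Finally, I plug in the parameter choice~\ref{a1}, $\al\asymp\delta^{2/(2\mu+1)}$: then $\delta\,\al^{-1/2}\asymp \delta^{1-1/(2\mu+1)}=\delta^{2\mu/(2\mu+1)}$ and $\al^\mu\asymp \delta^{2\mu/(2\mu+1)}$, so all four summands collapse to the common rate $\mathcal{O}(\delta^{2\mu/(2\mu+1)})$, which is the assertion. The only step not handed to us by the preceding material is the operator-norm estimate $\|\Bo_\al\|=\mathcal{O}(\al^{-1/2})$; I would cite it as a classical fact rather than re-derive it. Everything else is routine bookkeeping that balances the noise term against the approximation terms in the familiar way.
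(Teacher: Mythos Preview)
Your proof is correct and follows exactly the paper's route: the paper's proof is the single sentence ``The assertion follows from Lemma~\ref{lem:est},'' and you simply unpack that by inserting \ref{a2}--\ref{a4} into the lemma's estimate and then balancing via \ref{a1}. The one step you make explicit beyond the paper --- the bound $\|\Bo_\al\|=\mathcal{O}(\al^{-1/2})$ from the filter assumptions --- is indeed the standard ingredient needed to close the argument, and your sketch of it is fine.
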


\begin{proof}
The assertion follows from Lemma \ref{lem:est}.
\end{proof}

In the following section, we will give three examples of
regularization methods that arise as special  cases of
our  results given above. In  particular, we give a data driven
extension of  SVD regularization where the assumptions of
Theorem~\ref{thm:rates} are satisfied.

\section{Special cases}
\label{sec:spcases}

In this section, we demonstrate that our theory recovers known
existing results as special cases and demonstrate how to derive
novel  data driven regularization methods.
In particular, we show  that any classical regularization method,
regularization by null space networks and a deep learning variant
of truncated SVD fit within our  framework  introduced in
Section~\ref{sec:regprops}.

\subsection{Classical filter-based regularization}

Classical Tikhonov regularization is a special case of the regularization method defined in Theorem~\ref{theo:conv}
with
\begin{align*}
&\Bo_\al  =(\Ao^\ast\Ao+\al \Id_X)^{-1}\Ao^\ast \\
&\nun_{\theta(\al)}  =0 \,.
\end{align*}
In this case the distance function
\begin{equation*}
d_\al(x; \rho, \mu)=\inf\{\|x-(\Ao^\ast\Ao)^\mu \omega \| \mid \omega\in \X \wedge \|\omega\|\leq \rho\}
\end{equation*}
 is independent of $\al$ and therefore satisfies
 $d_\al(x; \rho, \mu) = \mathcal{O}(\al^\mu)$ if and only if $d_\al(x; \rho, \mu)=0$.
  This in turn is equivalent to
  \begin{equation*}
  x\in\{(\Ao^\ast \Ao)^\mu \omega \mid \omega\in \X \wedge \|\omega\|\leq \rho\}\,,
   \end{equation*}
   which is the classical source condition for the convergence
   rate $\|x-x_{\al,\delta}\|=\mathcal{O}(\delta^{\frac{2\mu}{2\mu+1}})$
   as $\delta \to 0$.

Clearly,  the above  considerations equally apply to any filter-based
regularization method including iterative Tikhonov regularization, truncated SVD,
and the Landweber iteration. We conclude that Theorem~\ref{thm:rates} contains classical
convergence rates results for classical regularization methods as special cases.

\subsection{Regularized null space networks}

In the case of  regularized null space networks, we
take $(\Bo_\al)_{\al>0}$ as a filter-based regularization method and
$\nun_{\theta(\al)}  = \nullnet$ for some null space network $\Id_X + \nullnet$.
In the following theorem we derive a decay rate of the distance function on the source set
\begin{equation*}
    \X_{\mu,\rho} \coloneqq \{(\Id_X+\nullnet)(\Ao^\ast \Ao)^\mu \omega \mid \omega \in \X 
    \text{ and } \|\omega\|\leq \rho\}
\end{equation*}
in the special case where the regularizing networks are given by a
regularized null space network.

For regularized null space networks, in \cite[Theorem 2.8]{schwab2018deep} we derive the convergence
rate  $\| \Ro_\al(\ndata)-x\| = \mathcal{O}(\delta^{\frac{2\mu}{2\mu+1}})$ for    $x \in \X_{\mu,\rho}$
and $\al \asymp \delta^{\frac{2}{2\mu+1}}$. The following theorem shows that
\cite[Theorem 2.8]{schwab2018deep} is  a special case of Theorem~\ref{thm:rates}.  In this sense, the results
of the current paper are indeed an extension of   \cite{schwab2018deep}.

\begin{theorem}[Convergence rates for regularized null space networks]
Let $\Id_X + \nullnet \colon \X \rightarrow \X$ be a null space network and take $\nun_{\theta(\al)} =\nullnet $ for all $\al >0$. Further, let $(\Bo_\al)_{\al>0}$ be a classical regularization method with qualification at least $\mu$
that satisfies $\nullnet \Bo_\al(0)=0$. Then  we have
\begin{equation}
d_\al(x; \rho, \mu)=\mathcal{O}(\al^\mu) \quad \text{for all $x \in \X_{\mu,\rho}$ }.
\end{equation}
In particular,  if $(\Bo_\al)_{\al >0}$ has qualification $\mu$ then the parameter choice $\al \asymp \delta^{2/(2\mu+1)} $ gives the convergence rate $\| \Ro_\al(\ndata)-x\| = \mathcal{O}(\delta^{2\mu/(2\mu+1)})$ for $x \in \X_{\rho, \mu}$.
\end{theorem}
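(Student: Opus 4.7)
The plan is to reduce the distance function estimate to an application of the qualification bound \eqref{eq:q1}, exploiting the defining property of the null space network that $\ran(\nullnet)\subseteq \ker(\Ao)$. Fix $x\in\X_{\mu,\rho}$ and write $x=z+\nullnet z$ with $z\coloneqq(\Ao^\ast\Ao)^\mu\omega_0$ and $\|\omega_0\|\leq\rho$. To bound the infimum in the definition of $d_\al(x;\rho,\mu)$ I would simply pick the canonical candidate $\omega=\omega_0$. With this choice the term $(\Ao^\ast\Ao)^\mu\omega$ cancels $z$ and one is left with
\begin{equation*}
d_\al(x;\rho,\mu)\leq \|\nullnet z-\nullnet \Bo_\al\Ao x\|.
\end{equation*}

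Next I would use that $\nullnet=\Po_{\ker(\Ao)}\NN$ is Lipschitz continuous with some constant $L$ to pull the norm inside, yielding the estimate $L\,\|z-\Bo_\al\Ao x\|$. The key algebraic observation is that $\Ao\nullnet z=0$ because $\nullnet z\in\ker(\Ao)$, hence $\Bo_\al\Ao x=\Bo_\al\Ao z+\Bo_\al\Ao\nullnet z=\Bo_\al\Ao z$ (using that $\Bo_\al$ is linear so $\Bo_\al(0)=0$, which is also consistent with the hypothesis $\nullnet\Bo_\al(0)=0$). Consequently
\begin{equation*}
d_\al(x;\rho,\mu)\leq L\,\|(\Id_X-\Bo_\al\Ao)(\Ao^\ast\Ao)^\mu\omega_0\|\leq L C\rho\,\al^\mu,
\end{equation*}
where the last bound is precisely the qualification estimate \eqref{eq:q1} for $\mu\leq\mu_0$. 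This establishes the claimed $\mathcal{O}(\al^\mu)$ decay.

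For the convergence rate statement I would invoke Theorem~\ref{thm:rates} with the set $\M_{\rho,\mu}=\X_{\mu,\rho}$. Assumption \ref{a1} is part of the hypothesis, \ref{a4} holds by assumption, and \ref{a2} is exactly what was just proved. The remaining condition \ref{a3} is in fact trivially satisfied here: since $\nun_{\theta(\al)}=\nullnet$ and $\ran(\nullnet)\subseteq\ker(\Ao)$, the vector $\nullnet\Bo_\al\Ao x$ lies in $\ker(\Ao)$, so $\Ao\nullnet\Bo_\al\Ao x=0$ and hence $\Bo_\al\Ao\nullnet\Bo_\al\Ao x=0$ for every $\al>0$. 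Plugging these facts into Theorem~\ref{thm:rates} yields the stated rate $\|\Ro_\al(\ndata)-x\|=\mathcal{O}(\delta^{2\mu/(2\mu+1)})$.

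There is no real obstacle in this argument; the only point that requires care is correctly identifying the right test element $\omega=\omega_0$ in the infimum so that the $(\Ao^\ast\Ao)^\mu\omega$ term absorbs $z$ and leaves behind a difference to which both the Lipschitz bound on $\nullnet$ and the qualification bound can be applied cleanly. The assumption $\nullnet\Bo_\al(0)=0$ is a mild normalization ensuring that the Lipschitz estimate on $\nullnet$ can be freely combined with the vanishing of $\Bo_\al$ at the origin; it is used only implicitly through the identity $\Bo_\al\Ao\nullnet z=0$.
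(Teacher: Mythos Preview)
Your argument is correct and follows essentially the same route as the paper: pick the canonical test element $\omega=\omega_0$ so that $x-(\Ao^\ast\Ao)^\mu\omega=\nullnet z$, use $\Ao\nullnet z=0$ to reduce $\Bo_\al\Ao x$ to $\Bo_\al\Ao z$, apply the Lipschitz bound on $\nullnet$, and finish with the qualification estimate \eqref{eq:q1}. Your write-up is in fact slightly cleaner than the paper's (which passes through an intermediate expression that looks as if $\nullnet$ were additive), and you go further by explicitly verifying \ref{a3} for the invocation of Theorem~\ref{thm:rates}, which the paper leaves implicit.
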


\begin{proof}
For $x\in \X_{\mu,\rho}$ we have
\begin{align} \nonumber
\|x-&\nullnet\Bo_\al \Ao \signal -(\Ao^\ast \Ao)^\mu\omega\|
\\ \nonumber
=& \|\nullnet(\Ao^\ast \Ao)^\mu \omega - \nullnet\Bo_\al \Ao(\Ao^\ast \Ao)^\mu \omega
\\ \nonumber  & -\nullnet\Bo_\al \Ao\nullnet(\Ao^\ast \Ao)^\mu \omega\|
\\ \nonumber
=&\|\nullnet(\Ao^\ast \Ao)^\mu \omega - \nullnet \Bo_\al \Ao(\Ao^\ast \Ao)^\mu \omega\|
\\  \nonumber \leq & L  \|(\Id_X-\Bo_\al \Ao)(\Ao^\ast \Ao)^\mu \omega\|
\\ \nonumber  \leq & L C\al^\mu.
\end{align}
Here $L $ denotes the Lipschitz constant of $\nullnet$ and $C$ is some constant depending on the regularization $(\Bo_\al)_{\al>0}$.
\end{proof}

\subsection{Data-driven continued SVD}

For the following, assume that $\Ao$ admits a singular value decomposition
\begin{equation*}
\left( (u_n)_{n\in\N},(v_n)_{n\in\N}, (\sigma_n)_{n\in\N}\right) \,,
\end{equation*}
where $(u_n)_{n\in\N}$ and $(v_n)_{n\in\N}$ are orthonormal systems in $\X$ and $\Y$, respectively, and $\sigma_n$ are positive numbers such that for all $\signal \in \X$
\begin{equation}\label{eq:svd}
\Ao \signal = \sum_{n\in\N} \sigma_n \langle u_n,\signal\rangle v_n.
\end{equation}

The  regularization method corresponding to the regularizing filter given in
\eqref{eq:SVDfil} yields to the truncated SVD given by
\begin{equation}
\Bo_\al(\data) = \sum_{\sigma_n^2\geq \al} \frac{1}{\sigma_n}\langle \data, v_n \rangle u_n.
\end{equation}
The truncated SVD only recovers signal components corresponding to sufficiently large singular values of $\Ao$ and
sets the other components to zero. It seems reasonable to train a network that extends
the coefficients with nonzero values and therefore can better approximate
non-smooth  functions.

To achieve a learned data extension, we consider a family of regularizing networks of the form \eqref{eq:reg}
\begin{align}  \nonumber
\Ro_\al (\ndata)   &\coloneqq (\Id_X+\nun_{\theta(\al)} )\Bo_\al (\ndata)
\\ \label{eq:dc-SVD1}  & \qquad
=
(\Id_X+\nun_{\theta(\al)} ) \sum_{\sigma_n^2\geq \al} \frac{1}{\sigma_n}\langle \ndata, v_n \rangle u_n
\\  \nonumber
\nun_{\theta(\al)} (z)  &\coloneqq (\Id_X-\Bo_\al \Ao) \NN_{\theta(\al)}  (z)
 \\   \label{eq:dc-SVD2} & \qquad
= \sum_{\sigma_n^2 < \al} \langle \NN_{\theta(\al)} z, u_n \rangle u_n \,.
\end{align}
For the data-driven continued SVD \eqref{eq:dc-SVD1}, \eqref{eq:dc-SVD2} the following
convergence rates result holds.

\begin{theorem}[Convergence rates for data-driven continued SVD]\label{thm:svd}
Let $(\Ro_\al)_{\al >0}$ be defined by \eqref{eq:dc-SVD1}, \eqref{eq:dc-SVD2}
and adapted to $((\Bo_\al)_{\al>0}, \nullnet)$, where $(\Bo_\al)_{\al>0}$ is given by truncated SVD 
and $\M$ is defined by \eqref{eq:Mset} for some null space network $\Id_X + \nullnet$. Moreover, assume that $d_\al(x; \rho, \mu) = \mathcal{O}(\al^\mu) $
for  all  $x \in \M_{\rho,\mu}$  in some set
$\M_{\rho,\mu} \subseteq \M$.
Then, provided that $\al \asymp \delta^{\frac{2}{2\mu+1}}$,
for  all $\signal \in \M_{\rho,\mu}$ we have
\begin{equation}\label{eq:dc-SVD3}
\| \Ro_\al(\ndata)-x\| = \mathcal{O}(\delta^{\frac{2\mu}{2\mu+1}})
\text{ as } \al \to 0 .
\end{equation}
\end{theorem}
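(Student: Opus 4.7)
My plan is to show that Theorem \ref{thm:svd} is a direct application of Theorem \ref{thm:rates}, so the strategy reduces to verifying the four hypotheses \ref{a1}--\ref{a4} for the data-driven continued SVD. Hypothesis \ref{a1} is given by assumption, and \ref{a2} is explicitly postulated. Hypothesis \ref{a4} follows immediately from the well-known fact that truncated SVD has infinite qualification (cf. the remark after Definition \ref{def:order}), so it satisfies \eqref{eq:quali} in particular for the value $\mu$ under consideration. The only genuinely new point is therefore \ref{a3}.

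The key observation that makes \ref{a3} essentially free is the algebraic structure built into \eqref{eq:dc-SVD2}. First I would spell out that for truncated SVD the operators $\Bo_\al \Ao$ and $\Id_X - \Bo_\al \Ao$ are the orthogonal projections onto the complementary closed subspaces
\begin{equation*}
\X_\al^{\ge} \coloneqq \overline{\Span} \{u_n \mid \sigma_n^2 \geq \al\}
\quad\text{and}\quad
\X_\al^{<} \coloneqq \overline{\Span} \{u_n \mid \sigma_n^2 < \al\}.
\end{equation*}
Then by \eqref{eq:dc-SVD2} we have $\ran(\nun_{\theta(\al)}) \subseteq \X_\al^{<}$, and since $\X_\al^{<}$ is exactly the kernel of $\Bo_\al \Ao$, it follows that $\Bo_\al \Ao \nun_{\theta(\al)}(z) = 0$ for every $z \in \X$. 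In particular,
\begin{equation*}
\|\Bo_\al \Ao \nun_{\theta(\al)} \Bo_\al \Ao \signal\| = 0 \quad \text{for all $\al > 0$ and $\signal \in \X$,}
\end{equation*}
which is much stronger than \ref{a3}.

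With all four hypotheses verified, Theorem \ref{thm:rates} immediately yields the convergence rate \eqref{eq:dc-SVD3} for every $\signal \in \M_{\rho,\mu}$. I do not foresee a genuine obstacle: the main point of the proof is essentially a bookkeeping argument together with the orthogonal-projection identity above. If anything, the only subtlety worth flagging is that the $\nun_{\theta(\al)}$ defined by \eqref{eq:dc-SVD2} should indeed be $((\Bo_\al)_{\al>0},\nullnet)$-adapted in the sense of Definition \ref{def:regnets}; but this is hypothesized in the statement of the theorem, so no extra work is needed here.
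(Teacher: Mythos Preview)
Your proposal is correct and follows essentially the same route as the paper: apply Theorem~\ref{thm:rates} by noting that \ref{a1}, \ref{a2} are assumed, \ref{a4} holds because truncated SVD has infinite qualification, and \ref{a3} holds because $\ran(\nun_{\theta(\al)}) \subseteq \overline{\Span}\{u_n \mid \sigma_n^2 < \al\}$ is annihilated by $\Bo_\al \Ao$, so the term in \ref{a3} is identically zero. The paper's proof is exactly this argument, only stated slightly more tersely.
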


\begin{proof}
We apply Theorem \ref{thm:rates} and for that purpose  verify \ref{a1}-\ref{a4}.
Items \ref{a1} and \ref{a2} are satisfied according to the made assumptions.
Moreover, we have  $$\ran( (\Id_X-\Bo_\al \Ao) \NN_{\theta(\al)}  )\subseteq \operatorname{span}\{u_i \mid  \sigma_i^2<\al\} \,.
$$
Then for $\signal \in \X$ and all $\al$,  $\|\Bo_\al\Ao \nun_{\theta(\al)}  \Bo_\al \Ao \signal\|$ vanishes and therefore \ref{a3}  is satisfied. Finally, it is well known  that truncated SVD has infinite qualification \cite[Example 4.8]{engl1996regularization}, which gives Assumption \ref{a4} in Theorem \ref{thm:rates} and concludes the proof.
\end{proof}

The  networks $\nun_{\theta(\al)} $ map  the truncated SVD reconstruction
$\Bo_\al (\ndata)$   lying in the space spanned by the reliable basis elements (corresponding to sufficiently large  singular values of the operator $\Ao$)  to coefficients unreliably predicted by $\Ao$.
Hence, opposed to  truncated SVD, $\Ro_\al$
 is some form of  continued SVD, where the extension of the unreliable  coefficients is  learned from the reliable ones  in a  data driven manner.

Opposed to the two previous examples,  for the data driven continued SVD we don't have a simple and explicit characterization for the sets  $\M_{\rho,\mu}$ in Theorem \ref{thm:svd}. These sets crucially depend on the nature of the networks $\nun_{\theta(\al)}$, the used training data and training procedure. Investigating and characterizing these sets in particular situations will be subject of future research. 

Another natural example is the case where
classical Tikhonov regularization $
\Bo_\al  =(\Ao^\ast\Ao+\al \Id_X)^{-1}\Ao^\ast$ is
used to define a RegNet $(\Ro_\al)_\al $ of the 
form~\eqref{eq:reg}.  Also in this example, Theorem \ref{theo:conv}  gives convergence of $(\Ro_\al)_\al $ under the assumption that $(\nun_{\theta(\al)})_{\al >0}$ is adapted to $((\Bo_\al)_{\al>0}, \nullnet)$. However, for Tikhonov regularization we are currently not able to verify \ref{a3} under natural assumptions, required for the convergence rates results. Investigating convergence rates for the combination of Tikhonov regularization or other regularization methods with a learned component will be investigated in future research.

\section{Numerical Example}

In this section we consider the inverse problem $g = \radon(f)$, where
$\radon$ is  an undersampled  Radon transform.
For that purpose, we compare classical truncated SVD,
the data-driven extended SVD and the null-space approach of \cite{schwab2018deep}.
Similar results are presented in~\cite{schwab2019deep} for the limited data problem of
photoacoustic tomography.

\subsection{Discretization}

We discretize the Radon transform $\radon$ by using radial basis functions.
For a phantom $f\colon\R^2 \rightarrow \R$ supported in the domain $[-1,1]^2$  we make the basis function ansatz
\begin{equation}\label{eq:discrete_phantom}
    f(x) = \sum_{i=1}^{N^2} c_i \varphi_i(x),
\end{equation}
for coefficients $c_i\in\R$ and $\varphi_i(x) = \varphi(x-x_i)$, where $x_i$ are arranged on a Cartesian grid on $[-1,1]^2$ and $\varphi\colon\R^2\to\R$ is the Kaiser-Bessel function given by
\begin{equation}\label{eq:kaiserbessel}
    \varphi(x) = \begin{cases}
	\frac{I_0\left((\rho \sqrt{1-(\|x\|/a)^2)}\right)}{I_0(\rho)} \quad & \norm{x}\leq a\,,\\
	0 & \text{otherwise} \,.
    \end{cases}
\end{equation}
Here  $I_0$ denotes the modified first kind Bessel function and the parameters controlling the shape and support are chosen $\rho=7$ and $a=0.055$ (around 4 pixels in the images shown below), respectively.
We take advantage of the fact that for Kaiser-Bessel functions  the Radon-transform is known analytically~\cite{lewitt1990multidimensional}.

For our simulations we evaluate the Radon-transform at $N_\theta=30$ equidistant angles in $\theta_k\coloneqq {(k-1)\pi}/{N_\theta}$ and $N_s=200$ equidistant distances to the origin in the interval $[-3/2,3/2]$. Further, we use a total number of $N^2=128^2$ basis function to approximate the unknown density $f$.
Then the discrete forward operator $\Ao\in\R^{N_s N_\theta\times N^2}$ is defined by
$\Ao_{N_s(n-1)+j,i} = \radon(\varphi_i) (s_n,t_j)$.
This results in the following inverse problem for the coefficients of the phantom
\begin{equation}\label{eq:num_problem}
    \text{Recover } c\in\R^{N^2} \text{ from data}\;\; \yy = \Ao c+\xi.
\end{equation}
Here the vector $\xi\in \R^{N^2}$ models the error in the data.

For our choice of $N_\theta$, the Radon-transform is highly undersampled and \eqref{eq:num_problem} is ill-conditioned.
In the following we consider the problem of recovering $c$, since the function $f$ can be reconstructed by evaluating \eqref{eq:discrete_phantom}. Note that $\varphi_i$ are translated versions of a fixed basis function with centers on a Cartesian grid. Therefore, we can naturally arrange the coefficients $c\in\R^{N^2}$ as an $N\times N$ image.
This image representation will be used for visualization and for the inputs of the regularizing networks.

\begin{figure}[htb!]
    \includegraphics[width=\textwidth]{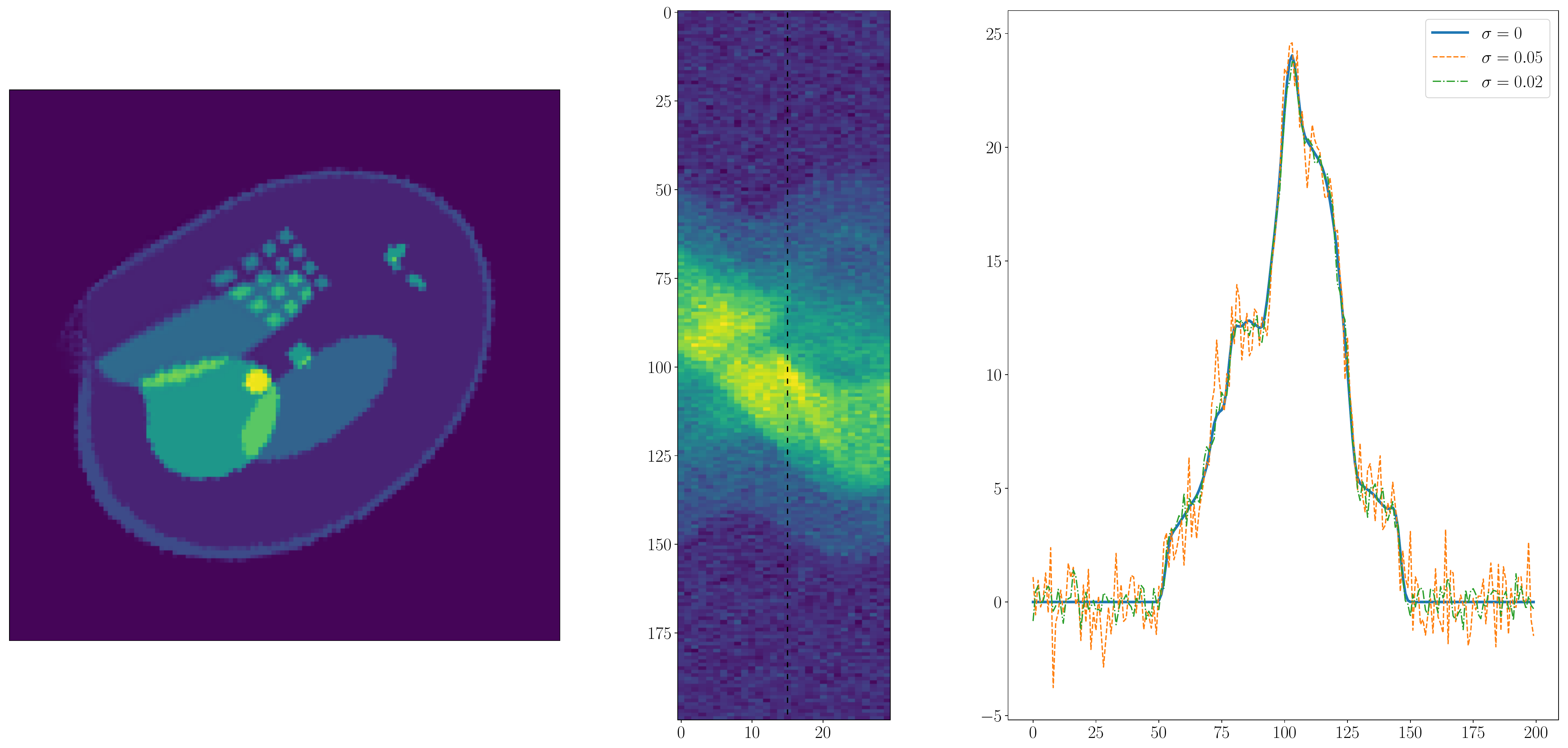}
    \caption{  \textsc{Right:} True phantom from the test set. \textsc{Middle:} Simulated sparse Radon data   $\Ao \xx+\delta\xi$ for $\N_\theta = 30$ directions, where $\xi_j\sim \norm{\Ao\xx}_\infty\mathcal{N}(0,1)$ with $\delta=0.05$.
    \textsc{Left:} Cross section of  the  data for the
    15th sensor directions for different noise levels.}
    \label{fig:true_data}
\end{figure}

\subsection{Used regularization methods}

Let $\Ao = U \Sigma V^\intercal$ be the singular value decomposition of the discrete forward operator. We denote by $(\uu_n)_{n=1}^{N_tN_\theta}$ and $(\vvv_n)_{n=1}^{N^2}$ the columns of $U$ and $U$ respectively and by $\sigma_1\geq\sigma_2\geq\ldots\geq\sigma_{N_sN_\theta}$ the singular values. Singular vectors  $\uu_n$ with vanishing singular values correspond  to components of the null space  $\ker (\Ao)$.

\begin{itemize}
\item
The truncated SVD $(\Bo_\al)_{\alpha>0}$ is then given by
\begin{equation}\label{eq:trunc_SVD}
    \Bo_\al(\yy) = \sum_{\sigma_n^2\geq\alpha} \frac{1}{\sigma_n}\innerprod{\yy,\vvv_n} \uu_n \quad \text{for } \yy\in\R^{N_sN_\theta}\,.
\end{equation}

\item The data-driven continued SVD (see  \eqref{eq:dc-SVD1}, \eqref{eq:dc-SVD2}) is  of the form
\begin{equation}\label{eq:network_exp}
    \Ro_\al(\yy) = \Bo_\al(\yy) + \sum_{\sigma_n^2<\alpha}\innerprod{\NN_{\theta(\al)}(\Bo_\al \yy),\uu_n}\uu_n\,,
\end{equation}
where $\NN_{\theta(\al)}\colon\R^{N^2}\to\R^{N^2}$ is a neural network
that operates on elements of $\R^{N^2}$ as $N\times N$ images, subsequently followed by the projection onto the singular vectors corresponding to the truncated singular values. We use the same U-net architecture as described in~\cite{PATsparse} (without residual connection) for $\NN_{\theta(\al)}$.
Note that the network does not affect the non-vanishing coefficients of the truncated SVD, which means that $\Ro_\al$ and $\Bo_\al$ reconstruct the same low frequency parts.

\item
Additionally, we apply the regularized null space network of \cite{schwab2018deep} which with the help of
the SVD can be evaluated by
\begin{equation}\label{eq:network_exp-n}
    \Ro_{\al}^0 (\yy) = \Bo_\al(\yy) + \sum_{\sigma_n^2 = 0}\innerprod{\NN_{\theta(\al)}^0(\Bo_\al \yy),\uu_n}\vvv_n \,.
\end{equation}
For the neural network $\NN_{\theta(\al)}$
we use again the U-net architecture as described
as above. Opposed  to \eqref{eq:network_exp}, the null space networks only add components of the kernel $\ker (\Ao)$
to $ \Bo_\al$.
\end{itemize}

Note that the implemented regularization  methods  fit in the general framework of RegNets, see 
Section~\ref{sec:spcases}. In particular, for all methods we have convergence as $\delta \to 0$. For the data driven continued SVD \eqref{eq:network_exp} this convergence result requires that there is some 
network $\NN \colon \X \to \X$ such that for all $\xx \in \ran (\Ao^\plus)$ we have   
\begin{equation*}
\lim_{\al \to 0}
\sum_{\sigma_n^2<\alpha}\innerprod{
\NN_{\theta(\al)} (\Po_\alpha \xx), \uu_n} \uu_n
=
\sum_{\sigma_n = 0 }\innerprod{\NN \xx,\uu_n}\uu_n \,,
\end{equation*}
where $ \Po_\alpha(\xx) \coloneqq \sum_{\sigma_n^2\geq\alpha} \innerprod{\xx,\uu_n} \uu_n$. We think that this convergence (at least on a reasonable 
subset of $\ran (\Ao^\plus)$) is reasonable using the same training strategy \eqref{eq:train_err} as below. Further theoretical and practical research, however, is required for rigorously analyzing this issue.

\subsection{Network training and reconstruction results}

The regularizing networks $\Ro_\al$ and $\Ro_\al^0$  were trained for different regularization parameters $\alpha$. Our training set consists of 1000 Shepp-Logan type phantoms  $c^{(k)}$ for
$k = 1, \dots ,1000$ as ground truth and the corresponding regularized reconstructions $\Bo_\al \yy^{(k)}$  where the data $\yy^{(k)}=\Ao c^{(k)}$ was simulated with the discrete forward operator $\Ao$.
We trained the network $\Ro_\al$ (and likewise $\Ro_\al^0$) by minimizing the mean absolute error (MAE)
\begin{equation}\label{eq:train_err}
\frac{1}{1000}\sum_{k=1}^{1000} \|c^{(k)}-\Ro_\al(\yy^{(k)})\|_1,
\end{equation}
with the stochastic gradient descent (SGD) algorithm. The learning rate was set to 0.05 and the momentum parameter to 0.99.
To evaluate the proposed regularizing networks we generated 250 phantoms for testing (see Figure~\ref{fig:true_data} for an example from the test set).

\begin{figure}[htb!]
    \includegraphics[width=\textwidth]{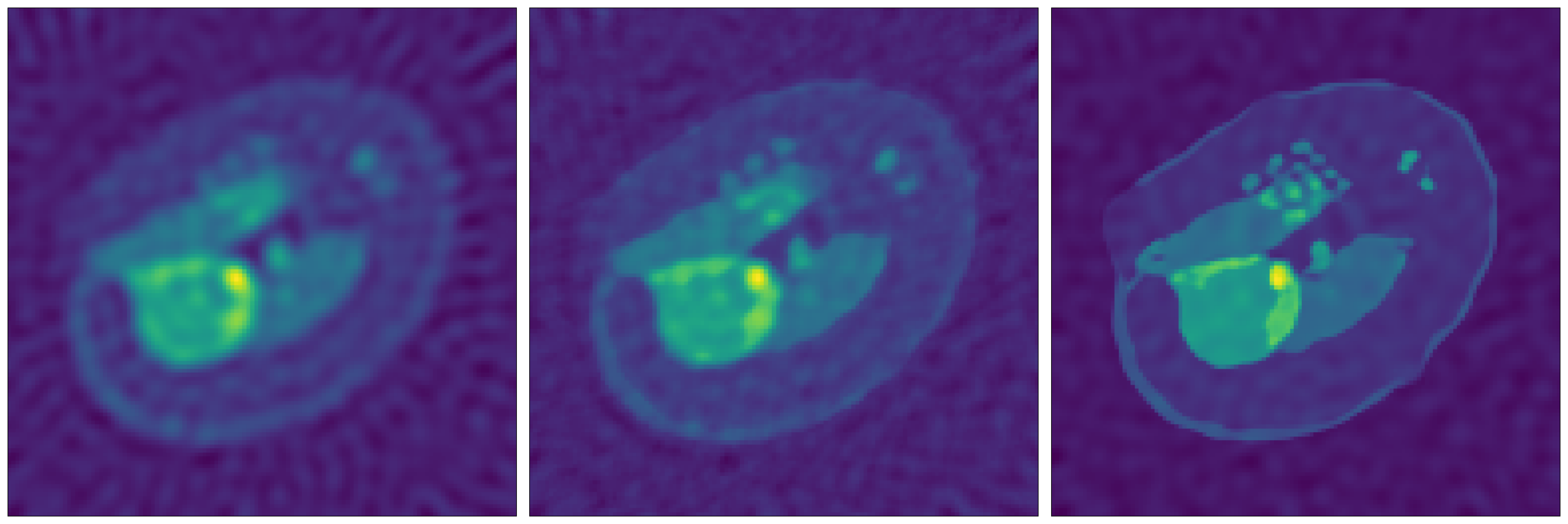}
    \caption{\textsc{Reconstructions} for low noise levels ($\delta=0.02$).
    \textsc{Left:} Truncated SVD. \textsc{Middle:} Nullspace network. \textsc{Right:} Reconstruction with continued SVD.
    }
    \label{fig:rec_lownoise}
\end{figure}

\begin{figure}[htb!]
    \includegraphics[width=\textwidth]{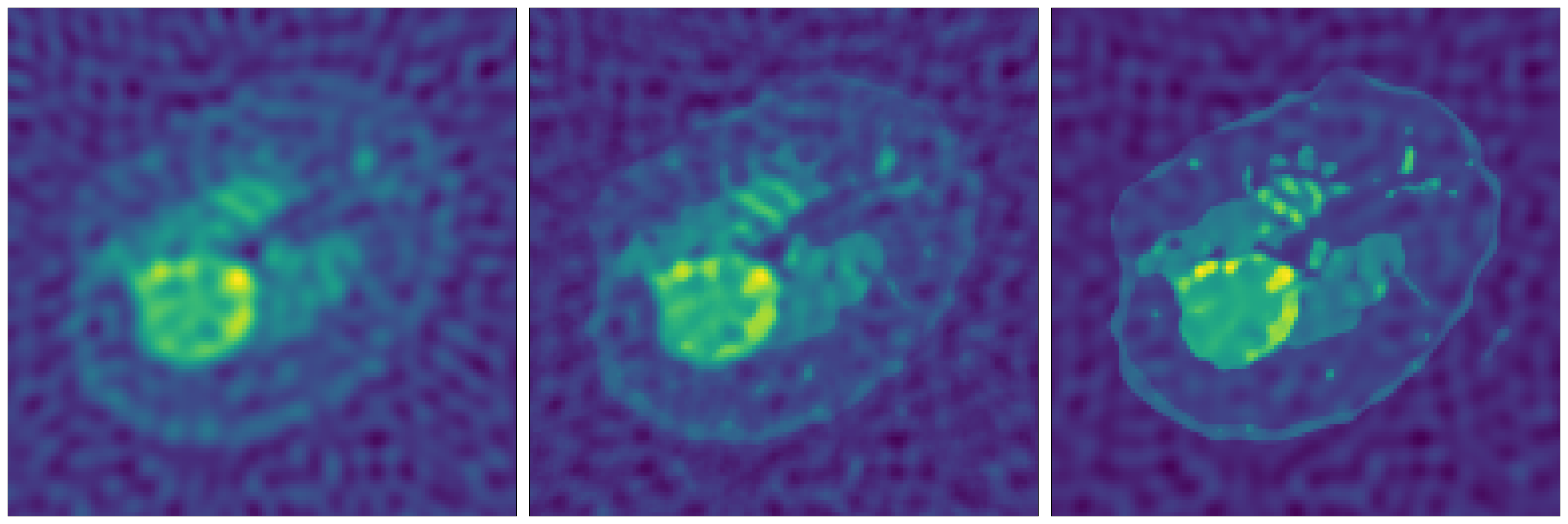}
    \caption{\textsc{Reconstructions} for higher noise levels ($\delta=0.05$).
    \textsc{Left:} Truncated SVD.
    \textsc{Middle:} Nullspace network.
    \textsc{Right:} Reconstruction with continued SVD.
    }
    \label{fig:rec_highnoise}
\end{figure}

We trained the networks $\Ro_\al$ and $\Ro_\al^0$ for 15 different values of the regularization parameter $\alpha$ the same way using noise free data minimizing \eqref{eq:train_err} for $\Ro_\al$ and $\mathbf{N}_\alpha$ respectively.   For the reconstructed images shown in Figure~\ref{fig:rec_lownoise} and Figure~\ref{fig:rec_highnoise} we took 10 different images with corresponding data  $\yy^{(k)} = \Ao\xx^{(k)} + \delta \xi^{(k)}$ with noise level of $\delta=0.05$, where $\xi^{(k)} \sim \snorm{\Ao\xx^{(k)}}_\infty \mathcal{N}(0,1)$.
Then we  chose the  regularization parameter with minimal   mean squared error, averaged over the 10 sample images. The resulting regularization parameter was $\alpha=1$ (which equals to taking the 796 biggest singular values).

For quantitative evaluation of the different approaches we calculated the mean errors for all 250 test images and all regularization parameters using the mean squared error (MSE) and the mean absolute error (MAE). All images were rescaled to have values in $[0,1]$ before calculating the error. The resulting error curves depending on the regularization parameter $\alpha$ (respectively, the number of used singular values)
are shown in  Figures  \ref{fig:errorplot_ln} and \ref{fig:errorplot_hn}.

\begin{figure}[htb!]
    \includegraphics[width=\textwidth]{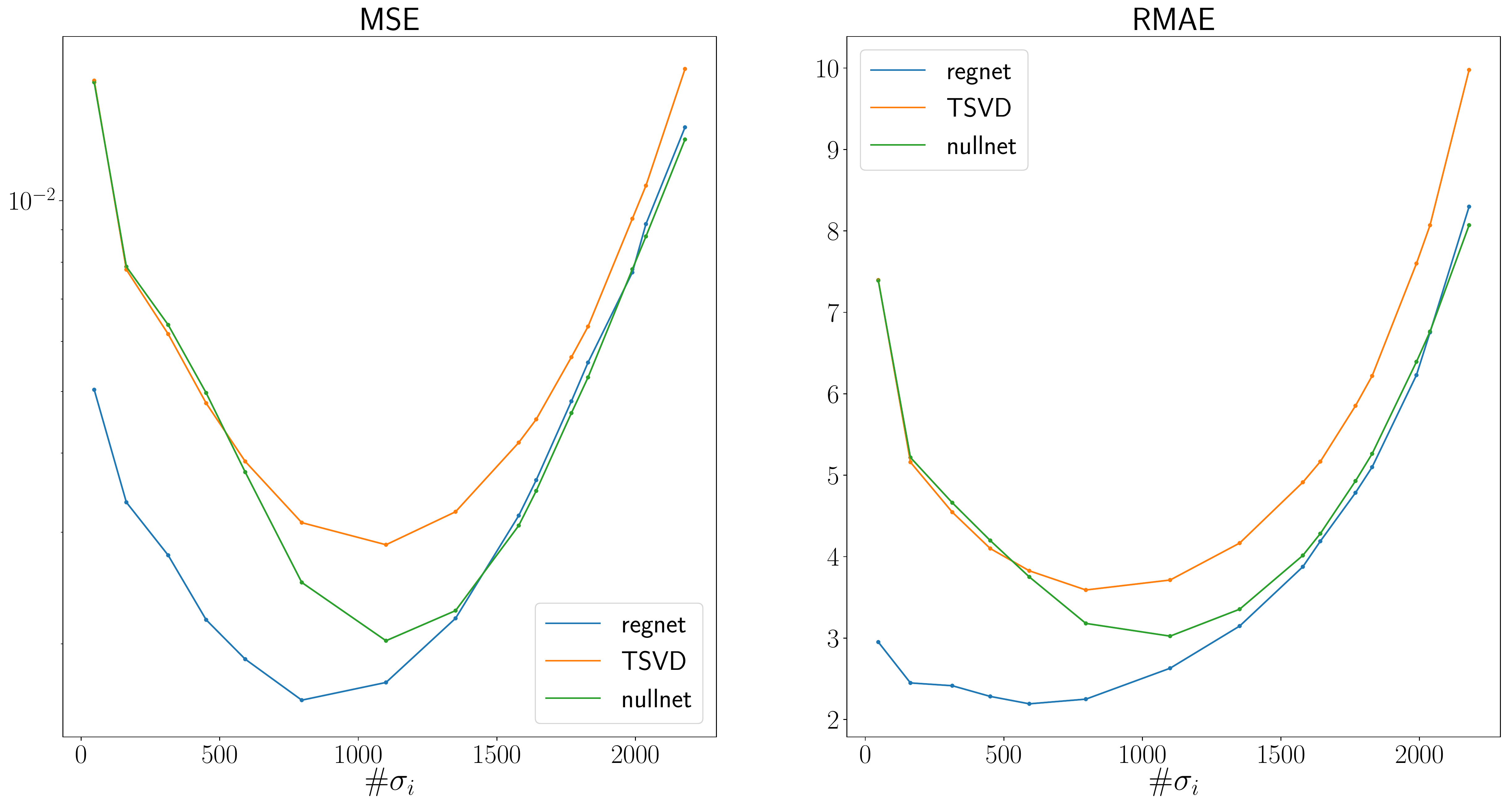}
    \caption{ \textsc{Mean Errors} for the test images using different error measures. On the $x$-axis are the number of used singular values. The noise level is $\delta=0.02$.
    }
    \label{fig:errorplot_ln}
\end{figure}

\begin{figure}[htb!]
    \includegraphics[width=\textwidth]{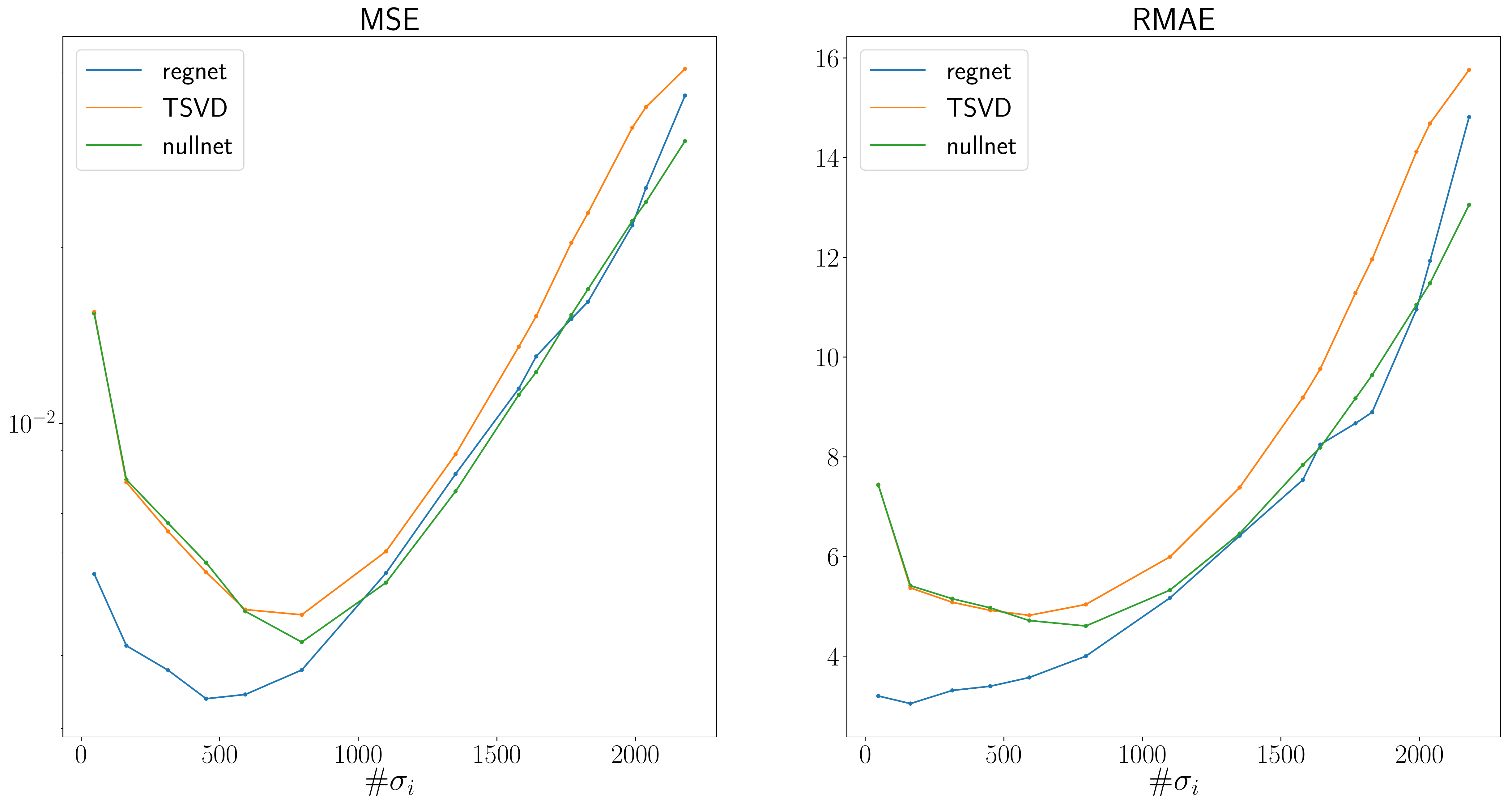}
    \caption{\textsc{Mean Errors} for the test images using different error measures. The $x$-axis shows  the number of used singular values. The noise level is  $\delta=0.05$.
    }
    \label{fig:errorplot_hn}
\end{figure}

\subsection{Discussion}

One can see that our proposed approach (data-driven continued
 SVD) in both cases outperforms the truncated SVD and the null space network; see  Figures~\ref{fig:rec_lownoise}  and~\ref{fig:rec_highnoise}. The better performance can also be  clearly seen in   Figures~\ref{fig:errorplot_ln} and \ref{fig:errorplot_hn}, where the reconstruction errors are shown for varying regularization parameter (the number of used singular values).
 The data-driven continued SVD  yields the smallest
 reconstruction errors followed by the null-space network and the
 truncated SVD.

 Interestingly, in these figures one also observes
 a shift to the left of the  error curve  for the methods with learned
 components compared to plain  truncated SVD.
 This  can be explained as follows. The continued SVD
 and the  null-space network preserve  the  singular components corresponding to large singular values. Further the reconstruction error corresponding to the truncated components is reduced  by applying the trained network  and therefore the overall error becomes reduced compared to the other two methods.
 We conclude that partially learned methods need less singular values to achieve  
accurate results.
 This effect is even larger for the learned SVD than for the null-space network.  This explains the improved performance of the learned SVD and the shift to the left in Figures~\ref{fig:errorplot_ln} and \ref{fig:errorplot_hn}.

There exists a variety of recently proposed deep learning based methods for solving inverse problems, and in
particular, for limited data problems in image reconstruction. Because the main  contribution of our work is the theoretical analysis we don't make the attempt here to numerically compare our method with other
deep learning based methods, for which no comparable theory is available.  
One  advantage  of our approach that we expect is the better generalization to data different from the training data. 
 Numerical studies investigating such issues is subject of future research.

\subsection{Extensions}

The probably most established deep learning approach to image reconstruction is to apply a two-step reconstruction network
$
\Ro_{\rm FBP}  
\coloneqq
(\Id + \NN_{\theta} ) \circ \Bo_{\rm FBP} $
where $\Bo_{\rm FBP}$ denotes the filtered backprojection operator and $(\Id + \NN_{\theta})$ is a trained residual network. The FBP $\Bo_{\rm FBP}$ can been seen as a regularization method in the case of full data. In the case of limited data this is not the case, and therefore it does not fully  fit into the framework of our theory.  Analyzing such more general situations opens  an interesting line of research,
that we aim to address in future work.

Another interesting generalization of our results is the extension to regularization also from left and from the right. In  this case the reconstruction networks have the form
\begin{equation*}
\Ro_{\al, \beta} (\data )
\coloneqq
\Bo_{\beta }^{(1)} (\Id + \nun_{\theta(\alpha, \beta)} ) \circ \Bo_{\al }^{(0)} \circ (\data) \,,
\end{equation*}
for regularization methods  $(\Bo_{\al}^{(0)})_\al$, $(\Bo_{\beta }^{(1)})_\beta$ and networks $\nun_{\theta(\alpha, \beta)} $. Extensions are even possible using cascades of network, which would have  similarity with  iterative and variational networks \cite{adler2017solving,kobler2017variational}  and cascades of networks \cite{kofler2018u,schlemper2017deep}. We expect that our results can be extended to such more general situations.

\section{Conclusion}
\label{sec:conclusion}

In this paper we introduced the concept of regularizing families of networks (RegNets), which are  sequences of deep CNNs. 
The trained components of the  networks, as well as the classical parts, are allowed to depend on the regularization parameter and it is shown, that under certain assumptions this approach yields a convergent regularization method. We also derived convergence rates under the assumption, that the solution lies in a source set, that is different from the classical source sets. Examples were given, where the assumptions are satisfied. It has been  shown, that the new
framework recovers results  for classical regularization as special cases as well as data driven improvements of classical regularization.
Such data driven regularization methods can give better  results in  practice than classical regularization methods which only use  hand crafted prior information.

As a numerical example, we investigated a sparse sampling  problem for  the Radon transform.  As  regularization method we took the truncated SVD and its data driven counterparts, the null-space network and the continued  SVD. Numerical results clearly demonstrate that   the continued  SVD outperforms classical SVD as well as the null space network.
Future work will be done to test the  proposed regularizing networks on  further ill-posed inverse problems and compare it with various  other regularization methods. A detailed numerical
comparison of our method with other deep learning  methods is subject of future research. This will reveal the theoretical advantage of  our method, that it actually  has improved generalizability.

\section*{Acknowledgement}
The work of M.H and S.A. has been supported by the Austrian Science Fund (FWF),
project P 30747-N32. Essential parts of this work have been finished during
the IUS conference 2018, October 22-25,  in Japan.

\end{document}